\documentclass[reqno, english]{amsart}

\usepackage{amsfonts}
\usepackage{amsthm}
\usepackage{thmtools}
\usepackage{amsmath}
\usepackage{amssymb}
\usepackage{amscd}
\usepackage[latin2]{inputenc}
\usepackage{t1enc}
\usepackage[mathscr]{eucal}
\usepackage{indentfirst}
\usepackage{graphicx}
\usepackage{graphics}
\usepackage{pict2e}
\usepackage{epic}
\usepackage{bbm}
\usepackage[left=1in,right=1in,top=1in,bottom=1in]{geometry}
\usepackage{epstopdf} 
\usepackage{enumitem}

\usepackage{hyperref}
\hypersetup{
	colorlinks = true,
    linkcolor = blue,
    citecolor = blue,
    urlcolor = black,
}
\usepackage{cleveref}

\theoremstyle{plain}
\newtheorem{thm}{Theorem}
\newtheorem{lem}[thm]{Lemma}

\newtheorem{claim}[thm]{Claim}

\theoremstyle{definition}

\newtheorem*{remark}{Remark}

\newcommand{\N}{\mathbb N}

\newcommand{\cA}{\mathcal{A} }

\newcommand{\pr}{\mathbb{P}}

\newcommand{\E}[0]{\mathbb{E}}

\newcommand{\beq}[1]{\begin{equation}\label{#1}}
\newcommand{\enq}[0]{\end{equation}}

\newcommand{\mn}[0]{\medskip\noindent}
\newcommand{\nin}[0]{\noindent}

\newcommand{\sub}[0]{\subseteq}
\newcommand{\sm}[0]{\setminus}

\newcommand{\0}[0]{\emptyset}

\newcommand{\go}[0]{\omega}
\newcommand{\gO}[0]{\Omega}

\newcommand{\gT}[0]{\Theta}

\newcommand{\eps}[0]{\varepsilon }

\newcommand{\prh}[1][]{\pr_h}

\newenvironment{subproof}[1][\proofname]{
  
  \begin{proof}[#1]
}{
  \end{proof}
}

\DeclareMathOperator{\aut}{aut}
\DeclareMathOperator{\ext}{Ext}

\author{Quentin Dubroff}
\address{Department of Mathematics, Carnegie Mellon University}
\email{qdubroff@andrew.cmu.edu}
\title{Thresholds vs.\ expectation thresholds for non-spanning graphs}

\begin{document} 

\begin{abstract}
    The threshold $p_c(H)$ for the event that the binomial random graph $G_{n,p}$ contains a copy of a graph $H$ is the unique $p$ for which $\mathbb{P}(H \subseteq G_{n,p}) = 1/2$, and the fractional expectation threshold $q_f(H)$ is roughly the best lower bound on $p_c(H)$ using simple expectation considerations. All previously known $H$'s with $p_c(H)$ substantially larger than $q_f(H)$ have the property that $v_H > n/2$ (where $v_H$ is the number of vertices of $H$). We construct small graphs whose threshold for containment in $G_{n,p}$ is of different order than their corresponding fractional expectation threshold: there is a constant $c > 0$ such that for any $m \; (\leq n)$, there is a graph $H$ with $v_H = m$ and $p_c(H) > q_f(H) c \log^{1/2}(v_H).$
\end{abstract}

\maketitle

\section{Introduction}

Let $G_{n,p}$ be the Erd\H{o}s--R\'enyi random graph, the random subgraph of the complete graph on $\{1,2,\dots,n\}$ obtained by keeping each edge with probability $p$. For an increasing event $\cA$ (one not destroyed by the addition of edges), the \emph{threshold} $p_c(\cA)$ is the unique $p$ for which $\pr(G_{n,p} \in \cA) = 1/2$. Thresholds have been a central topic in probabilistic combinatorics since work of Erd\H{o}s and R\'enyi in the early 1960s. Recently, the study of thresholds has been transformed by proofs of the Kahn--Kalai conjecture and its fractional version \cite{FKNP, PP}, powerful statements that furnish estimates on thresholds that are strikingly precise and often easy to compute. In particular, the machinery of \cite{FKNP} associates to any increasing event $\cA$ a relatively simple, first-moment based lower bound $q_f(\cA)$ on $p_c(\cA)$ (called the \emph{fractional expectation threshold}) which is never off by more than a log factor, i.e.
\beq{KK} q_f(\cA) \leq p_c(\cA) < K q_f(\cA) \log \ell(\cA),\enq
where $K$ is an absolute constant and $\ell(\cA)$ is the number of edges in the largest minimal element of $\cA$.

The estimates on $p_c(\cA)$ in \eqref{KK} are very strong (see \cite{FKNP,JPsurvey} for more discussion), but there remain many settings where more precision is needed, for example \cite[Conjecture 1.1]{JKV} or \cite[Conjectures 7.1--7.3]{Talconj} (or more speculatively, the Erd\H{o}s--Rado sunflower conjecture; see \cite{BCW}). 
The purpose of this note is to explore one possibility for refining \eqref{KK} in the context of graph containment in $G_{n,p}$. In the graph containment problem, there is a graph $H$ under consideration, $\cA$ is the event that that $G_{n,p}$ contains a copy of $H$ (see usage notes below), and we simplify notation by writing $p_c(H)$ and $q_f(H)$ instead of $p_c(\cA)$ and $q_f(\cA)$. Here, the parameter $q_f(H)$ from \eqref{KK} takes a simple form:
 \[q_f(H) = \max\{p : \exists\, F \sub H, \; \E_p[X_F] \leq N(H,F)/2\},\]
 where $X_F$ and $N(H,F)$ are respectively the number of copies of $F$ in $G_{n,p}$ and $H$ (see \cite{KKQ} for pointers to the literature and a quick proof that $q_f(H)$ matches the usual, more general definition of $q_f(\cA)$). Note \cite{KKQ, MNSZ} that the inequality $q_f(H) \leq p_c(H)$ is immediate from Markov's inequality since for any $F\sub H$
 \[\pr(H \sub G_{n,p}) \leq \pr(X_F \geq N(H,F)) \leq \E_pX_F/N(H,F).\]
 
Even in the restricted setting of graph containment, both the upper bound and lower bound on $p_c$ in \eqref{KK} can be essentially tight, but it seems that the lower bound is more often the truth. In fact, all previously known examples of $H$'s with $p_c(H)$ much larger than $q_f(H)$ have the property that $H$ is nearly spanning, meaning $v_H > n/2$. This motivates the natural problem, raised independently by (at least) Jeff Kahn, Nina Kam\v{c}ev, and the author, of determining whether nearly spanning graphs are the only graphs for which $p_c(H)/q_f(H)$ can be arbitrarily large. Here we settle this question.
\begin{thm}\label{MT}
    There is $c> 0$ such that for any $m \;(\leq n)$, there is a graph $H$ with $v_H = m$ and
    \beq{lower}p_c(H) > q_f(H) c\log^{1/2}v_H.\enq
\end{thm}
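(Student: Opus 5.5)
The plan is an explicit construction in which \emph{every} vertex of a small dense core must have large degree in $G_{n,p}$. This is a degree constraint that first-moment counts of subgraphs cannot see, because the number of pendant stars at a vertex is enormous even when that vertex's degree is typical. Concretely, I would take $H$ to be a clique $K_s$ with $d$ pendant leaves attached to each clique vertex, and pad with isolated vertices so that $v_H=m$ exactly. The parameters would be tuned as follows. Let $p_0$ be the natural candidate threshold coming from $F=K_s$, roughly $n^{-2/(s-1)}$ up to constants. Set $\mu=np_0$ and $d=\mu+x\sqrt{\mu}$, with $x$ chosen later so that $x^2$ is of order $\log m$ (more precisely, of order $s\log(\cdot)$ in the regime that makes the numbers fit). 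Here $s$ and $d$ are chosen as functions of $m$ so that $s(d+1)\le m$.

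The first step is an upper bound on $q_f(H)$, i.e.\ showing that for $p$ slightly above $p_0$ every $F\sub H$ has $\E_p X_F> N(H,F)/2$. Any $F$ decomposes as a subclique $K_t$, partial stars of sizes $d_1,\dots,d_t$ hanging from it, and some disjoint pendant stars and edges. Then $\E X_F$ is roughly $n^t p^{\binom t2}\prod_i \mu^{d_i}/d_i!$ times the contributions of the components, while $N(H,F)\le \binom st\prod_i\binom{d}{d_i}\cdot(\text{choices for the rest})$. Since $\mu^{d_i}/d_i!\ge \binom d{d_i}(\mu/d)^{d_i}$ and $\mu/d=1-O(x/\sqrt\mu)$, the star factors cost at most $e^{O(x\sqrt\mu)}$ per clique vertex. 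This has to be absorbed by a constant-factor increase in $p$ in the clique part. I would check that this holds precisely when $x\sqrt\mu\lesssim \mu$, which constrains $x$ and is the bookkeeping I expect to be routine but fiddly.

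The second step is the lower bound on $p_c(H)$, via a first moment over the right witness. A copy of $H$ requires a copy of $K_s$ all of whose vertices have degree at least $d+s-1$ in $G_{n,p}$. Let $Y$ be the number of $s$-cliques all of whose vertices have degree at least $d$. I would bound $\E Y\le n^s p^{\binom s2}\,\pr(\mathrm{Bin}(n-s,p)\ge d-s)^s$, using the fact that, once the clique edges are exposed, the degrees into the remaining vertices are independent. At $p=Cp_0\sqrt{\log m}/\!\!\sqrt{\log m}$, i.e.\ with only a constant-factor increase of $\mu$ over $np_0$, the Chernoff tail is $e^{-\Omega(x^2)}$ per vertex. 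This gives $\E Y\le (n p^{(s-1)/2}e^{-cx^2})^s<1/2$ unless $p^{(s-1)/2}\gtrsim e^{cx^2}/n$, i.e.\ unless $p\ge p_0 e^{2cx^2/(s-1)}$. Choosing $x^2\asymp s\log\log m$, or more simply tuning $x^2/s$ against $\log$ of the target ratio, yields $p_c(H)/q_f(H)\ge c\log^{1/2} v_H$.

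The main obstacle is reconciling the two steps. The degree excess $x\sqrt\mu$ must be large enough that the Gaussian tail $e^{-x^2/2}$, compounded over the $s$ clique vertices, shifts the clique threshold by a factor $\sqrt{\log m}$. At the same time it must be small enough that no subgraph $F$ (especially $F$ consisting of a subclique together with full stars) pushes $q_f$ up by the same factor. The $\sqrt{\log}$ in \eqref{lower} should be exactly what this Gaussian-versus-Poisson tradeoff allows. I would first try $s\approx \sqrt{\log m}$ and $\mu\approx \log m$, and if the computation in the first step fails I would adjust the relation between $s$ and $\mu$.
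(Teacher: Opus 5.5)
There is a genuine gap: the clique-with-pendant-stars construction does not give a growing ratio, and your two steps are incompatible, not merely fiddly to reconcile.

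\textbf{Your own regime fails.} Let $p_0$ be defined by $\E_{p_0}X_{K_s}=1/2$ and put $\mu=np_0$. Taking $F=K_s$ shows $q_f(H)\ge p_0$. Step 1 forces $d-\mu=x\sqrt\mu\lesssim\mu$, say $d\le A\mu$ for a constant $A$. Now use the lower bound
$\E Y\ge\binom ns p^{\binom s2}\,\pr(\mathrm{Bin}(n-s,p)\ge d)^s$
at $p=(A+1)p_0$. The outside degree of each clique vertex then has mean about $(A+1)\mu>d$, so each probability factor is at least roughly $1/2$. Hence $\E Y\gtrsim\tfrac12(A+1)^{\binom s2}2^{-s}\gg1$.

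The tail $e^{-\Omega(x^2)}$ is valid only while $np$ stays below $d$ by order $x\sqrt\mu$, i.e.\ only for $p\le(1+x/\sqrt\mu)p_0$. Your step ``$\E Y<1/2$ unless $p\ge p_0e^{2cx^2/(s-1)}$'' holds $x$ fixed while the mean degree grows past $d$, which is not legitimate. So your first-moment argument can certify at most $p_c(H)/q_f(H)\le A+1$. Separately, your trial parameters are inconsistent: $\mu=np_0\approx n^{1-2/(s-1)}$ cannot be $\approx\log m$ when $s\approx\sqrt{\log m}$.

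\textbf{Relaxing Step 1 does not help.} For $p>q_f(H)$, the subgraph $F=$ (non-isolated part of $H$) gives $\frac{n^sp^{\binom s2}}{s!}\bigl(\frac{\mu_p^d}{d!}\bigr)^s\gtrsim1$, where $\mu_p=np$. At $Kp$ the probability that a vertex has degree $\ge d$ is roughly at least $e^{-K\mu_p}K^d\mu_p^d/d!$. So $\E Y$ at $Kp$ is $\gtrsim K^{\binom s2}(K^de^{-K\mu_p})^s$. This is $\ge1$ when $d\log K\ge K\mu_p$. If instead $d\log K<K\mu_p$ with $K=e^2$, then the mean degree at $Kp$ already exceeds $2d$, and $\E Y\gtrsim K^{\binom s2}2^{-s}$. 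Either way $\E Y\gtrsim1$ at $e^2q_f(H)$.

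You are right that subgraph counts miss part of a degree constraint. But that missing part costs only $e^{-\Theta(\mu)}$ per core vertex, and a constant-factor increase of $p$ repays it through the factor $K^d$ gained on each pendant star, plus $K^{\binom s2}$ gained on the clique.

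\textbf{The missing idea} is the right coupon-collector accounting. The number of required disjoint witnesses must greatly exceed the degrees of freedom in the core, and each witness's failure probability must decay only slowly in $p$. Your $H$ has $s$ requirements against a core with $\binom s2$ edges, which is the opposite of what is needed.

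The paper uses a sparse core: a random $4$-regular $F$ on $t$ vertices, with $2t$ edges. It then demands a private common neighbour for each of the $\binom t2$ pairs of core vertices. At $p=C/\sqrt n$, a given pair has a common neighbour outside $X$ with probability $1-(1-p^2)^{n-t}<1-e^{-(1+o(1))C^2}$. The disjoint-occurrence (BK-type) inequality then gives $\pr(X\text{ extends})\le\exp[-\binom t2e^{-(1+o(1))C^2}]$. The first-moment gain from the core is only $n^tp^{2t}=C^{2t}$. So the expected number of bad sets is $o(1)$ until $C^2\asymp\log t$, which is exactly where the $\log^{1/2}$ comes from.

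The matching bound $q_f(H)\le L/\sqrt n$ also needs its own ingredients: local sparsity of the random core ($e_K\le v_K$ for small $K\sub F$) and the path and cycle counts in $H$. Your Step 1 analysis supplies neither.
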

The idea behind \Cref{MT} comes from the informal intuition that the lower bound in \eqref{KK} is essentially correct unless there is some ``coupon collector'' prerequisite for membership in $\cA$. For example in the graph containment problem when $H$ is a perfect matching, where $q_f(H) \asymp 1/n$ and $p_c(H) \asymp \log n/n$ (see \cite{JPsurvey} for a nice exposition of this), the coupon collector prerequisite is that each vertex must lie in an edge of $G_{n,p}$. A proper formalization of this intuition seems likely to be extremely useful.

For the graphs constructed in the proof of \Cref{MT}, the inequality \eqref{lower} is tight up to the \mbox{constant $c$}. However, in light of \eqref{KK}, it is possible that there are non-nearly spanning graphs $H$ with $p_c(H) > q_f(H) c\log v_H$ (and $v_H \rightarrow\infty$), i.e.\ non-nearly spanning graphs achieving the maximum possible gap between the fractional expectation threshold and the actual threshold. It would be nice to see such a graph, but more interesting would be to show that such a graph cannot exist.

\subsubsection*{Usage} For a graph $H$, we use $V(H)$ and $E(H)$ for the sets of vertices and edges of $H$ and $v_H$ and $e_H$ for the respective sizes of these sets. For a subgraph $R \sub H$ and $v\in V(H)$, we write $d_R(v)$ for the degree of $v$ in $R$, the number of edges of $R$ incident to $v$. If $W \sub V(H)$, we write $H[W]$ for the subgraph of $H$ induced by $W$.

As usual, $G_{n,p}$ is the Erd\H{o}s--R\'enyi random graph on vertex set $\{1,\dots, n\}$ obtained by retaining each edge of the $n$-vertex complete graph $K_n$ independently with probability $p$. A copy of a graph $H$ in another graph $G$ is an unlabeled subgraph of $G$ isomorphic to $H$. We use $N(G,H)$ for the number of copies of $H$ in $G$. We (abusively) write $H \sub G_{n,p}$ to refer to the event that $G_{n,p}$ contains a copy of $H$. A labeled copy of $H$ is an injection from $V(H)$ to $V(G)$ that maps edges of $H$ to edges of $G$. 

Asymptotic notation ($O(\cdot), \gO(\cdot), \sim, \go(\cdot), o(\cdot), \gT(\cdot)$) is standard. We sometimes write $f \asymp g$ to mean $f = \gT(g)$.

\subsection*{Acknowledgments} Thanks to Jeff Kahn for many enlightening discussions.

\section{Construction}

Let $F$ be a random $4$-regular graph on $t$ vertices. For every $\{x,y\} \in \binom{V(F)}{2}$, add a new vertex $z$ and the edges $xz$ and $yz$. In the resulting graph $H$, we have $v_H = t + \binom{t}{2}$ and $e_H = 2v_H$.

\begin{lem}\label{ethresh}
There is a constant $L$ such that for any $t$ (with $t + \binom{t}{2} \leq n$), w.h.p.\ (over choices of $F$)
    \[q_{f}(H) \leq L/\sqrt{n}.\]
\end{lem}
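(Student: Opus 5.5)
The plan is to verify the definition of $q_f$ directly. Since $\E_p[X_{F'}]$ is increasing in $p$ and $N(H,F')$ does not depend on $p$, it suffices to show the following: with $p = L/\sqrt n$ and $L$ a large constant, w.h.p.\ over $F$, every nonempty $F'\sub H$ satisfies $\E_p[X_{F'}] > N(H,F')/2$. Write $\ext(F',H)$ for the number of labeled copies of $F'$ in $H$. Then $N(H,F') = \ext(F',H)/\aut(F')$ and $\E_p[X_{F'}] = (n)_{v}p^{e}/\aut(F')$, where $v=v_{F'}$ and $e=e_{F'}$. Because $(n)_v\ge (n/e)^v$ (here $e$ is Euler's number) and $v\le v_H\le n$, the goal reduces to
\[ \ext(F',H) < 2 (n/e)^{v} (L/\sqrt n)^{e}. \]
Also, both sides are multiplicative over the components of $F'$ up to harmless factors, since $\ext$ of a disjoint union is at most the product of the $\ext$'s. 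So I would treat each connected $F'$ separately and absorb constants into $L$.

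To count labeled copies, split $V(F')$ into $X$ and $Z$. Here $X$ is the set of vertices that must map into $V(F)$, i.e.\ those of degree $>2$ or lying on an $F$-edge; otherwise I sum over the at most $2^{v}$ possible assignments of vertices to the two sides, which costs only a $C^v$ factor. Split $Z=Z_0\cup Z_1\cup Z_2$ according to how many neighbours in $X$ each $z$-vertex has. Then $e = e_F(X) + z_1 + 2z_2$, where $e_F(X)$ counts the $F'$-edges mapped into $F$; the images of $X$ induce a subgraph of the $4$-regular graph $F$.

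The count goes as follows. First embed $X$ into $F$. A connected piece of $F[X]$ on $k$ vertices has at most $t\cdot 4^{k}$ labeled images. Each $z\in Z_2$ is then determined, since a pair $\{x,y\}$ has a unique common $z$-neighbour. Each $z\in Z_1$ has at most $t$ choices, and each isolated $z\in Z_0$ has at most $v_H\le n$ choices. On the other side, the target is $n^{v-e/2}L^{e}$. This equals $n^{z_0}\, n^{z_1/2}\, n^{|X| - e_F(X)/2}$ times $L^{e}$. Since $t\le\sqrt{2n}$, the $Z_1$ vertices are paid for ($t\le \sqrt 2\, n^{1/2}$), and the $Z_0$ vertices trivially. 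Each component $Q$ of $F[X]$ with $k$ vertices costs $t\,4^k$ and earns $n^{k-e(Q)/2}L^{e(Q)}$. If $Q$ is a proper connected subgraph of the connected $4$-regular graph $F$ (w.h.p.\ $F$ is connected), then $e(Q)\le 2k-1$, so we earn at least $n^{1/2}\ge t/\sqrt2$. The leftover $4^k$ is beaten by the powers of $L$, as long as $e(Q)\ge k-1$ is proportional to $k$, or $k$ is small enough to be absorbed. Isolated vertices of $X$ behave like $Z_0$. Finally, pieces of $F'$ that only link $X$-components through $Z_2$ vertices need care: such $z$'s are determined, but they give only $2$ edges per extra vertex. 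For them I would run the bookkeeping per connected component of $F'$ rather than per component of $F[X]$.

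The main obstacle is the case where some component of $F[X]$ is all of $F$. Then $e_F(X)=2t$, the $n$-power gain is zero, and we must bound the number of embeddings of $F$ into itself, namely $\aut(F)$, by something like $C^t$ rather than $t\cdot 4^t$. This is exactly where "w.h.p.\ over $F$" is used: a random $4$-regular graph is w.h.p.\ connected and has trivial automorphism group (a known result I would cite). With that, this case contributes $O(1)$ against a gain of $L^{2t}$. A secondary point to check carefully is the balance in the mixed case described above. When several $X$-components are glued by $Z_2$ vertices, I expect the needed inequality to follow from $L$ being large: each $Z_2$ vertex contributes $n\cdot(L^2/n)=L^2$ with only one choice. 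But I would verify that the $t$-factors from the extra $X$-components, each of which is chosen freely among $t$ vertices, are still covered by their $n^{1/2}$ gains.
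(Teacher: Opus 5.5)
Your route is correct, and it differs genuinely from the paper's and is shorter.

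\textbf{How the paper proceeds.} It bounds $N(H,R)\aut(R)$ lossily. It embeds the core $R_B$ (the vertices of $R$-degree at least $3$, which are forced into $V(F)$) with labels, extends by bare paths, and multiplies by $\aut(R)\le\aut(R_B)\aut(R/B)$, so a factor $\aut(R_B)^2$ appears. Paying for that square (an extra $s!$ when $R_B$ has $s$ isomorphic components) is what requires Lemma~\ref{lem:inF}. That in turn requires the local sparsity of Lemma~\ref{lem:sparse} and $L=64e^{1/\eps}$. The bare paths are handled by the path and bridge counts of Claim~\ref{cl:count}.

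\textbf{What your approach buys.} Counting labeled copies from the start makes $\aut(F')$ cancel exactly. The reduction to connected $F'$ then comes for free from submultiplicativity. All path counting is replaced by one observation: a vertex sent to $W$ is determined by its two neighbours, or has $t-1$ choices if it has only one. The only input you need from randomness is that $F$ is connected, so that every component $Q\ne F$ of $F'[X]$ has $e(Q)\le 2v_Q-1$. The paper's argument needs only local sparsity instead, so it tolerates a disconnected $F$ whose components are not too small.

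\textbf{Three remarks on your write-up.}

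First, the case $Q=F$ is not an obstacle. The BFS bound already gives at most $t4^{t-1}\le 8^t$ self-embeddings, which $L^{2t}$ beats, so the trivial-automorphism theorem is unnecessary. What randomness really has to exclude is, for instance, $\Theta(t)$ components isomorphic to $K_5$. These would give $N(H,K_5)\asymp t$ against $\E_pX_{K_5}=O(L^{10})$, and the lemma would fail.

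Second, the glued case needs no extra care. Assume $n\ge L^2$; otherwise $L/\sqrt n\ge 1$ and there is nothing to prove. Then a component $Q\ne F$ on $k$ vertices earns $n^{k-e(Q)/2}L^{e(Q)}\ge n^{1/2}L^{2k-1}$. This covers $t4^{k-1}$ together with the per-vertex constants coming from the side assignment and from $(n)_v\ge (n/e)^v$. Each degree-two $W$-vertex costs one choice and earns $L^2$. So bookkeeping per component of $F'[X]$ closes however the $Q$'s are glued.

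Third, isolated vertices must be removed before you pass to $(n/e)^v$ and pay $2^v$, because they carry no power of $L$. For an edgeless $F'$ on $v_H=n$ vertices your reduced inequality is false. Strip them first using $(n)_{v''+i}=(n)_{v''}(n-v'')_i\ge (n)_{v''}(v_H-v'')_i$; this is the paper's reduction (a).
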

\begin{remark}
    To simplify presentation, we omit the simple adjustments needed for construction of $H$ with $v_H$ not of the form $t + \binom{t}{2}$. \Cref{ethresh} is proved in Section~\ref{sec:exp}. The proof is more involved than one would wish, but to give some basic intuition, we note that w.h.p.\ $H$ is balanced, i.e.\ $e_H = 2v_H$ and $e_R \leq 2v_R$ for every $R \sub H$, which is a prerequisite for \Cref{ethresh}. If $t$ is sufficiently large ($t > \log^{1 + \eps} n$ for fixed $\eps > 0$ suffices), then the construction works with $F$ as the empty graph, and \Cref{ethresh} becomes easier, though still not immediate. Similarly, if one is content with proving \Cref{ethresh} with $q_f(H)$ replaced by (the smaller)
\[p_{\E}(H):= \max\{p : \exists\, F \sub H, \; \E_p[X_F] \leq 1/2\},\]
then the proof also somewhat simplifies. 
A conjecture of \cite{KKQ} states that $q_f(H) \asymp p_{\E}(H)$ for every $H$, but the best result to date \cite[Theorem 1.5]{KKQ} says that
\[(p_{\E}(H)\leq\;) \;q_f(H) < O(\log^2 n) p_{\E}(H).\]
\end{remark}

\subsection*{A lower bound on the threshold}

For the remainder of the section, $\pr$ denotes probability in $G = G_{n,p}$ with $p = C/\sqrt{n}$, and $F$ is an arbitrary $4$-regular graph. We will show $\pr(H \subseteq G) = o(1)$ if $C < c\sqrt{\log t}$ for some $c>0$ (as $t \rightarrow \infty$).

\mn 
Say a set $X$  (from $\{1,2,\dots, n\}$) of $t$ vertices is \emph{dangerous} if it contains a copy of $F$. This happens with probability at most $t!p^{e_F}$ by Markov's inequality. 

\mn
Say a set $X$ of $t$ vertices \emph{extends} if every pair of vertices $\{x,y\} \in \binom{X}{2}$ has a distinct common neighbor outside of $X$. So, there is a copy of $H$ in $G$ if and only if there is an $X$ which is dangerous and extends.

\mn
A set $X$ of $t$ distinct vertices is \emph{bad} if
\begin{enumerate}[label=(\alph*)]
    \item\label{supp} $X$ is dangerous and
    \item\label{ext} $X$ extends.
\end{enumerate}

\mn
Let $Z$ count the number of bad sets of $t$ vertices. It is enough to show
\beq{expZ}
\E[Z] < o(1) \quad \text{if} \quad C < c\sqrt{\log t}.
\enq

\begin{proof}[Proof of \eqref{expZ}]
    Since \ref{supp} and \ref{ext} are independent, we have 
    \beq{ez}
	\E[Z] = \binom{n}{t} \pr\ref{supp}\pr\ref{ext}.
	\enq
    
	\nin As noted after the definition of dangerous, Markov's inequality gives
    \[\pr\ref{supp} \leq t!p^{e_F} = t! p^{2t},\]
	
    \nin and we will show
    \beq{extbound}
    \pr\ref{ext} < \exp\left[-\binom{t}{2} e^{-(1 + o(1))C^2}\right].
    \enq
    Using the last two inequalities in \eqref{ez}, we find
    \beq{main}\E[Z] < n^t p^{2t}\pr\ref{ext} < C^{2t}\exp\left[-\binom{t}{2} e^{-(1 + o(1))C^2}\right],\enq
    which is $o(1)$ if $C < c\sqrt{\log t}$ (and $t \rightarrow \infty$) as desired. 
    \end{proof}
    
    To complete the proof we need to establish \eqref{extbound}.
	
	We use an extension of the Van den Berg--Kesten (BK) inequality showing negative correlation for disjoint occurrence of multiple events. Given increasing events $\cA_1,\dots,\cA_k$, their disjoint occurrence is the event
    \[\Box_i\, \cA_i = \{A_1\cup A_2 \cup\dots \cup A_k: A_i \in \cA_i \;\forall i,\; A_i \cap A_j = \0 \text{ if } i \neq j\}.\]
    The following inequality follows immediately from \cite[Theorem 1]{BKBK}, but, as noted in \cite{BKBK}, the special case here can be obtained easily from the BK inequality by induction on $k$. (See also \cite{Reimer} for more on the BK inequality and \cite{thesis} for a short proof.)
    \begin{lem}\label{lem:corr}
        \[\pr(\Box_i \, \cA_i) \leq \prod_i \pr(\cA_i).\]
    \end{lem}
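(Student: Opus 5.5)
The plan is to derive \Cref{lem:corr} from the classical two-event BK inequality by induction on $k$, as the remark before the lemma suggests. Throughout, the increasing events $\cA_i$ are families of edge sets of $K_n$ that are closed upward. I read $\Box_i\,\cA_i$ as the increasing event that $G$ contains disjoint edge sets $A_1,\dots,A_k$ with $A_i\in\cA_i$. This is the same as saying that $G$ lies in the upward closure of the family in the displayed definition, which is the only sensible reading since $\pr$ is evaluated at a random graph.

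The BK inequality states that for increasing events $\cA,\mathcal B$ on a product space,
\[\pr(\cA\,\Box\,\mathcal B)\le \pr(\cA)\pr(\mathcal B).\]
Here the product space is $\{0,1\}^{E(K_n)}$ with the product measure of $G_{n,p}$. For $k=1$ there is nothing to prove, and $k=2$ is exactly BK.

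For the inductive step I would set $\mathcal B_{k-1}=\Box_{i<k}\cA_i$ and check two facts.
\begin{enumerate}[label=(\roman*)]
\item $\mathcal B_{k-1}$ is increasing. This holds because if $G\supseteq A_1\cup\dots\cup A_{k-1}$ with the $A_i$ pairwise disjoint, then any $G'\supseteq G$ contains the same witnesses.
\item $\Box_{i\le k}\cA_i=\mathcal B_{k-1}\,\Box\,\cA_k$. This is an associativity statement, proved in two directions.
\begin{itemize}
\item Given pairwise disjoint witnesses $A_1,\dots,A_k\subseteq G$, the set $B=A_1\cup\dots\cup A_{k-1}$ lies in $\mathcal B_{k-1}$ and is disjoint from $A_k$.
\item Conversely, suppose $G$ contains disjoint sets $B\in\mathcal B_{k-1}$ and $A_k\in\cA_k$. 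Then $B$ contains pairwise disjoint $A_1,\dots,A_{k-1}$ with $A_i\in\cA_i$. These are automatically disjoint from $A_k$ because they sit inside $B$.
\end{itemize}
\end{enumerate}
Having both facts, BK and the induction hypothesis give
\[\pr\bigl(\Box_{i\le k}\cA_i\bigr)\le \pr(\mathcal B_{k-1})\,\pr(\cA_k)\le \prod_{i\le k}\pr(\cA_i).\]

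There is no real obstacle here. The only point needing care is that BK applies to $\mathcal B_{k-1}$, which requires it to be increasing. For this one should work with the upward-closed version of the disjoint-occurrence event (witness sets contained in $G$), not the literal family of disjoint unions. With that convention in place, the associativity check in (ii) is the entire content of the argument. Alternatively, one can simply cite \cite[Theorem 1]{BKBK} or Reimer's theorem \cite{Reimer}, which covers arbitrary events and in particular implies the statement directly.
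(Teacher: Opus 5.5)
Your proof is correct and follows the route the paper itself indicates: derive the lemma from the two-event BK inequality by induction on $k$, using that $\Box_{i<k}\cA_i$ is increasing and that $\Box_{i\le k}\cA_i=(\Box_{i<k}\cA_i)\,\Box\,\cA_k$ (the paper also mentions simply citing \cite[Theorem 1]{BKBK}). One small remark: because the $\cA_i$ are increasing, the paper's literal family $\{A_1\cup\dots\cup A_k\}$ is already upward closed, since any extra edges can be absorbed into $A_1$, so your two readings coincide and no convention needs to be chosen.
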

    
    \begin{proof}[Proof of \eqref{extbound}]
	For a $t$-set $X$ and $x,y \in X$, let $\cA_{\{x,y\}}$ be the event that $x$ and $y$ have a common neighbor outside of $X$. Then the event that $X$ extends is the same as
	\[\underset{\{x,y\}\in {\binom{X}{2}}}{\Box} \cA_{\{x,y\}}.\]
    Lemma~\ref{lem:corr} gives
	\beq{T1}
	\pr(\Box_{\{x,y\}} \cA_{\{x,y\}}) \leq \prod_{\{x,y\}} \pr(\cA_{\{x,y\}}). 
	\enq
	Now fix $x,y \in X$ and let $Y$ count the number of common neighbors $\{x,y\}$ outside of $X$. Observe that $Y \sim \text{Bin}(n-t, p^2)$
	so 
	\[\pr(\cA_{\{x,y\}}) = \pr(Y > 0) = 1 - (1 - p^2)^{n-t} < 1 - e^{-(1 + o(1))C^2},\]
	which, plugged into \eqref{T1}, gives \eqref{extbound}.
\end{proof}

\section{The expectation threshold}\label{sec:exp}

    Here we prove \Cref{ethresh}. Throughout this section $p = Ln^{-1/2}$ for some constant $L$ to be determined. Our goal is to show that if $L$ is sufficiently large, then $\E_p[X_R] \geq N(H,R)$ for every $R \sub H$ (w.h.p.\ over choices of $F$). Note that (using $(m)_k = m(m-1)\dots(m-k+1)$ for the falling factorial),
    \beq{orient}\E_p[X_R] = \frac{(n)_{v_R}p^{e_R}}{\aut(R)} > \frac{n^{v_R}p^{e_R}}{e^{v_R}\aut(R)}.\enq
    To simplify matters:
    \begin{enumerate}[label=(\alph*)]
        \item We assume $R$ has no isolated vertices. \emph{Justification:} removing isolated vertices cannot increase $\E_p[X_R]/N(H,R)$ (see \cite[Claim 2.6]{KKQ} for the simple, formal proof).
        \item We work with $\mu_p(R) := n^{v_R}p^{e_R}/\aut(R)$ instead of $\E_p[X_R]$. \emph{Justification:} if $R$ has no isolated vertices and we can show $\mu_p(R) \geq N(H,R)$, then $\E_{e^2p}[X_R] \geq N(H,R)$ because (using \eqref{orient} and the fact that $2e_R \geq v_R$ for $R$'s with no isolated vertices)
        \[\E_{e^2 p}[X_R] > \frac{n^{v_R}(e^2p)^{e_R}}{e^{v_R}\aut(R)} = \frac{n^{v_R}p^{e_R}e^{{2e_R}}}{e^{v_R}\aut(R)} \geq \frac{n^{v_R}p^{e_R}}{\aut(R)} \geq N(H,R).\]
        Thus if \Cref{ethresh} holds with $\mu_p$ replacing $\E_p$ in the definition of $q_f$, then it also holds as currently stated (with $L$ replaced by $e^2L$).
        \item\label{iso} We assume that all components of $R$ are isomorphic. \emph{Justification:} if $R = R_1 \cup R_2$ with $V(R_1) \cap V(R_2) = \emptyset$ and no component of $R_1$ isomorphic to a component of $R_2$, then $\mu_p(R) = \mu_p(R_1)\mu_p(R_2)$ and $N(H,R) \leq N(H,R_1)N(H,R_2)$, so if $\mu_p(R_i) \geq N(H,R_i)$ for each $i$, then 
        \[\mu_p(R) = \mu_p(R_1)\mu_p(R_2)\geq N(H,R_1)N(H,R_2) \geq N(H,R).\]
        (In fact, we could assume that $R$ is connected, as in \cite[Proposition 2.4]{KKQ}, but this stronger assumption doesn't seem to make the proof any easier.)
    \end{enumerate}
    To summarize, our updated goal is to show that if $p = L/\sqrt{n}$ and $L$ is sufficiently large, then
    \beq{goal}
        \frac{n^{v_R}p^{e_R}}{\aut(R)} \geq N(H,R)
    \enq
    for every $R \sub H$ with all connected components of $R$ isomorphic to some graph, say $K$. We also note for future use that the condition $t + \binom{t}{2} \leq n$ implies
    \beq{t2n}
        t < \sqrt{2n}.
    \enq

    We prove \Cref{ethresh} for any $F$ satisfying the conclusion of the following well-known lemma (see e.g.\ \cite[Lemma 9.12]{FriezeKaronski}).
    \begin{lem}\label{lem:sparse}
        There is $\eps > 0$ such that if $F$ is a uniformly random $4$-regular graph on $t$ vertices, then w.h.p.\ 
        \beq{bal}e_{K} \leq  v_{K}\enq
        for every $K \sub F$ with $v_K \leq \eps \log t$. 
    \end{lem}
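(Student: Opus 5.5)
We use the first moment method on subgraphs of $F$ with at most $k=\lfloor \eps\log t\rfloor$ vertices that have more edges than vertices. The target statement is that w.h.p.\ every $K\sub F$ with $v_K\le k$ satisfies $e_K\le v_K$. Since removing edges from a subgraph only helps, it suffices to show that w.h.p.\ $F$ contains no subgraph $K$ with $v_K = s\le k$ and $e_K = s+1$. Any subgraph with $e_K>v_K$ contains one of these: delete edges until exactly $v_K+1$ remain.

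We compute in the configuration model, where $F$ arises from a uniform perfect matching on $4t$ points, four points per vertex. The configuration is simple with probability bounded below by a constant, $e^{-(16-1)/4}$ or similar. Hence an event of probability $o(1)$ in the configuration model also has probability $o(1)$ for the uniform $4$-regular graph, and it suffices to work in the configuration model.

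Fix $s\le k$ and bound the expected number of $(s,s+1)$-subgraphs:
\begin{enumerate}[label=(\roman*)]
\item Choose the vertex set: at most $\binom{t}{s}\le (et/s)^s$ ways.
\item Choose the graph structure on $s$ labeled vertices with $s+1$ edges: at most $\binom{\binom s2}{s+1}\le (es/2)^{s+1}$ ways.
\item Choose which configuration points realize each edge: at most $4^{2(s+1)}$ ways.
\item Each of the $s+1$ prescribed pairs appears in the random matching with probability at most $1/(4t-2s-1)$. Jointly this gives at most $(4t-4s)^{-(s+1)}\le (3t)^{-(s+1)}$ for $s\le t/4$.
\end{enumerate}
Multiplying, the expected count is at most
\[
\Big(\frac{et}{s}\Big)^s\Big(\frac{es}{2}\Big)^{s+1}\frac{16^{s+1}}{(3t)^{s+1}}
\;\le\; \frac{C_0^{\,s}\, s}{t}
\]
for an absolute constant $C_0$. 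Summing over $s\le \eps\log t$ gives at most $(\eps\log t)^2 C_0^{\eps\log t}/t = t^{\eps\log C_0-1+o(1)}$. This is $o(1)$ once $\eps<1/(2\log C_0)$. Markov's inequality then yields the lemma.

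The main point needing care is that the edge count must exceed the vertex count by one. Subgraphs with $e=v$ (cycles) do appear, with expected count $\Theta(1)$ for each length. The factor $1/t$ gained from the single excess edge is exactly what beats the $C_0^s$ growth, and this is why $s$ must be restricted to $O(\log t)$. The transfer from the configuration model to uniform simple graphs is routine via the constant lower bound on the simplicity probability.
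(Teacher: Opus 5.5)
This statement has no proof in the paper to compare against: the paper cites it as a well-known fact from Frieze--Karo\'nski. Your argument is the standard proof of that fact, and it is correct. You take a first-moment count of subgraphs with $s$ vertices and $s+1$ edges in the configuration model, transfer it to the uniform $4$-regular graph via the bounded-below probability of simplicity, and your product does come to $\frac{s}{t}e^{2s+1}(8/3)^{s+1}\le C_0^{s}s/t$; the single excess edge's factor $1/t$ then beats $C_0^{s}$ for $s\le\eps\log t$ with $\eps$ small.
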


    \nin \Cref{lem:sparse} will be used to obtain the following statement.
    \begin{lem}\label{lem:inF}
        There is a constant $L$ so that if $p = L/\sqrt{n}$ and $F$ is a $4$-regular graph on $t < \sqrt{2n}$ vertices such that \eqref{bal} holds for every $K \sub F$, then
        \beq{muNaut} n^{v_{F'}}p^{e_{F'}} \geq N(F,F')\aut(F')^2\enq
        for every $F' \sub F$.                                         
    \end{lem}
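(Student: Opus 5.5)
The plan is a direct counting argument. I bound the right-hand side of \eqref{muNaut} by counting labeled copies of $F'$ in $F$ and automorphisms of $F'$ via spanning trees, using that $F$ has maximum degree $4$. I then absorb the resulting factors component by component, using \eqref{bal} for small components and a large power of $L$ for large ones. I read the hypothesis on \eqref{bal} as applying only to $K\sub F$ with $v_K\le \eps\log t$, as in \Cref{lem:sparse}; no other form is needed.

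\emph{Step 1: bounding the right-hand side.} Let $F'$ have components $C_1,\dots,C_k$, with $v_i=v_{C_i}$ and $e_i=e_{C_i}$. Note that $N(F,F')\aut(F')$ is the number of labeled copies of $F'$ in $F$. For a connected $C_i$, fix a spanning tree and a root. The root has at most $t$ images, and each later vertex (in BFS order) has at most $4$ images, since its tree-parent's image has degree $4$. Hence
\[N(F,F')\aut(F')\le \prod_i t\,4^{v_i}.\]
Similarly, an automorphism of a connected $C_i$ is determined by the image of the root ($\le v_i$ choices) and then at most $4$ choices per vertex, so $\aut(C_i)\le v_i4^{v_i}\le t\,4^{v_i}$. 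Components may also be permuted; I control this by $k!\le\prod_i t$, charging one factor $t$ to each component, using $k\le t$. Altogether,
\[N(F,F')\aut(F')^2\le \prod_i t^3\,16^{v_i}.\]
It therefore suffices to show, for each $i$,
\[n^{v_i-e_i/2}L^{e_i}\ge t^3\,16^{v_i}.\]
I would tidy the permutation count to get $t^2$ per component instead of $t^3$, for instance via $k!\le\prod_i k$, or by noting $\aut(F')\le\prod_j a_j!\prod_i\aut(C_i)$ where $a_j$ counts components of each isomorphism type. Since $t^2<2n$ by \eqref{t2n}, a per-component target of $t^2 16^{v_i}$ is the natural one, and the rest of the plan uses it.

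\emph{Step 2: splitting the components by size.} I distinguish three kinds of component.

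\begin{itemize}
\item \emph{Small components}, with $2\le v_i\le\eps\log t$. By \eqref{bal}, $e_i\le v_i$, so $n^{v_i-e_i/2}\ge n^{v_i/2}$. For $v_i\ge 3$ this is at least $n^{3/2}$, which dominates $2n\cdot 16^{v_i}>t^2 16^{v_i}$ once $n$ is large, because $16^{v_i}\le 16^{\eps\log t}$ is only a small power of $n$. For $v_i=2$ the component is a single edge, and the count is exact: $n^{1}L\ge 2n\cdot 16^2$ holds once $L$ is large, with small constants handled by that choice of $L$.
\item \emph{Isolated vertices}, which are not excluded in $F'$. Such a component contributes $n$ on the left and $t\cdot t$ on the right, and $t^2<2n$. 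To avoid the factor $2$, I would handle all isolated vertices together: they contribute $n^{a}$ on the left against $(t)_a$ labeled images and $a!$ permutations, and $(t)_a\,a!\le t^{2a}$ is too lossy, so I will instead use $\binom{t}{a}a!^2\le t^a a!\le n^a$ when $t^2<2n$ is used carefully. Constants can also be absorbed by the $L^{e}$ factors of other components.
\item \emph{Large components}, with $v_i>\eps\log t$. Here I only use $e_i\le 2v_i$, so $n^{v_i-e_i/2}\ge1$, and connectivity gives $e_i\ge v_i-1$. Then
\[L^{e_i}\ge L^{v_i-1}\ge \big(L/16\big)^{v_i}\,16^{v_i}/L,\]
and
\[(L/16)^{v_i-1}\ge (L/16)^{\eps\log t-1}=t^{\eps\log(L/16)}/(L/16)\ge t^3\]
for $L$ a large enough constant depending on $\eps$. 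This is the key point: large components pay for the polynomial-in-$t$ root and automorphism factors through $L^{e}$, and small components pay through the power of $n$.
\end{itemize}

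\emph{The main obstacle.} The delicate part is the bookkeeping of the permutation factor $k!$ and of the isolated and tiny components, where there is no slack beyond constants: an isolated vertex yields exactly $n$ against roughly $t^2$. The large/small dichotomy itself is clean. I expect the real work to be making the automorphism count tight enough that each non-edge, non-isolated component is charged at most $t^2\cdot 16^{v_i}$, and then letting the edge-bearing components absorb all leftover constants through $L$.
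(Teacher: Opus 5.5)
Your proposal is correct and is essentially the paper's proof. It uses the same max-degree-$4$ counts (a factor $t$ per component for the root, $4^{v}$-type factors for embeddings and automorphisms, and at most $t$ per component for permuting components) and the same split at $v\approx \eps\log t$: \eqref{bal} gives $n^{v/2}$ to beat $t^2<2n$ on small components, a large power of $L$ beats $t^{O(1)}$ on large ones, and isolated vertices are checked directly via $(t)_a\,a!\le n^a$.

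The paper differs only in bookkeeping. It first reduces to $F'$ with all components isomorphic and splits on $v_K$ once, where you charge component by component, which is why you must group the isolated vertices. One small repair is needed: your chain $t^{\eps\log(L/16)}/(L/16)\ge t^3$ fails for bounded $t$. The inequality you actually need, $(L/16)^{v_i-1}\ge 16t^3$, still holds for all $t$ once $L\ge 256e^{6/\eps}$, using $v_i\ge 2$ and $t<e^{v_i/\eps}$.
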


    We also need some control on the number paths and cycles incident to vertices of $H$. For $k,m \in \N$, let 
    $p_k(m)$ be the maximum, over $v \in V(H)$, number of ways to choose $m$ paths of length $k$ in $H$ starting at $v$, and similarly, let $b_k(m)$ be the maximum, over $(x,y) \in V^2$, number of ways to choose $m$ paths of length $k \geq 2$ in $H$ connecting $x$ and $y$. So if $x = y$, then $b_k(m)$ is twice the number of ways of attaching $m$ $k$-cycles to $x$.
    \begin{claim}\label{cl:count}
        If $p \geq 8n^{-1/2}$, then
        \beq{path}
            p_k(m) m! < (np)^{km},
        \enq
        and
        \beq{bridge}
            b_k(m)m! < (n^{k-1}p^k)^m.
        \enq
    \end{claim}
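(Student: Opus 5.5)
The plan is to reduce both inequalities to a bound on the number of single paths and then to count paths by hand, using the very restricted structure of $H$. Since $p_k(m)$ counts unordered choices of $m$ paths of length $k$ from $v$, we have $p_k(m)\, m! \le P_k(v)^m$, where $P_k(v)$ is the number of paths of length $k$ starting at $v$. It therefore suffices to show $P_k(v) < (np)^k$ for every $v$. In the same way, $b_k(m)\, m! \le B_k(x,y)^m$, where $B_k(x,y)$ is the number of paths of length $k$ from $x$ to $y$ (for $x=y$, the two orientations of each $k$-cycle through $x$ are counted separately, matching the factor $2$ in the definition). So it suffices to show $B_k(x,y) < n^{k-1}p^k$. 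Since $p \ge 8n^{-1/2}$, we have $np \ge 8\sqrt n$, and the targets become $P_k(v) < (8\sqrt n)^k$ and $B_k(x,y) < 8^k n^{(k-1)/2}$.

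Next I record the degrees in $H$. Each vertex of $F$ has degree $4 + (t-1) = t+3$, and by \eqref{t2n} we have $t+3 < \sqrt{2n}+3$. Each added vertex $z$ has degree $2$. Consequently, once a path enters an added vertex $z$, the next step is forced. I would decompose any path into \emph{units}, each beginning at a vertex of $F$:
\begin{itemize}
\item an $F$-edge, of length $1$, with at most $4$ choices;
\item a detour $x \to z \to y$ through an added vertex, of length $2$, with at most $t-1$ choices (the choice of $z$ determines $y$);
\item possibly a final half-detour ending at some $z$, of length $1$, with at most $t-1$ choices.
\end{itemize}
If $v$ is itself an added vertex, there is an extra initial step with $2$ choices. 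The budget per unit of length is $8\sqrt n$. An $F$-edge uses $4 \le 8\sqrt n$. A detour uses $t-1 < \sqrt{2n} \ll 64n$ over its two steps. A half-detour uses $t-1 < 8\sqrt n$. Multiplying these bounds along the unit decomposition gives $P_k(v) < (8\sqrt n)^k$. The generous slack in each unit (factor at least $2$) absorbs the initial factor $2$, and it also absorbs the sum over the at most $2^k$ ways of splitting $k$ into units. This is where I would be slightly careful: I would bound the sum over unit patterns as a product, by noting that each unit's contribution is at most half its budget.

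For \eqref{bridge}, the point is that fixing the endpoint $y$ saves one factor of roughly $\sqrt n$, and this saving occurs in the last unit. I would run the same decomposition and examine the unit that reaches $y$.
\begin{itemize}
\item If it is a detour $x' \to z \to y$, then $z$ is the unique added vertex for the pair $\{x',y\}$, so the unit has $1$ choice instead of $t-1$. This is well within its adjusted budget $(8\sqrt n)^2/\sqrt n = 64\sqrt n$.
\item If it is an $F$-edge into $y$, it has $1 \le 8 = 8\sqrt n/\sqrt n$ choices.
\item If $y$ is an added vertex, then the final step into $y$ is forced from one of its two neighbours, giving at most $2$ choices against a budget of $8$.
\end{itemize}
All earlier units are bounded exactly as in the path count, so $B_k(x,y) < 8^k n^{(k-1)/2} \le n^{k-1}p^k$.

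The main obstacle is bookkeeping rather than ideas: the sum over unit patterns must stay below the slack, and the edge cases must be handled. These are paths starting or ending at added vertices, the case $x=y$, and very small $k$ (in particular $k=2$, where the whole path may be a single detour and only the endpoint-saving argument applies). I would handle these by direct inspection.
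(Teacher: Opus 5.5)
Your treatment of \eqref{path} is sound and uses the same mechanism as the paper: vertices of $W=V(H)\setminus V(F)$ have degree $2$ and $W$ is independent, so entries into $W$ come in forced pairs. Your bound $P_k(v)<(8\sqrt n)^k$ is much cruder than the paper's $4^kt^{\lceil k/2\rceil}$, but it suffices. Make the pattern sum precise via the recursion $a_k\le 4a_{k-1}+(t-1)a_{k-2}$ for $a_k=\max_{v\in V(F)}P_k(v)$, which closes because $4/(8\sqrt n)+(t-1)/(64n)<1$. The literal count ``at most $2^k$ patterns, a factor $2$ of slack per unit'' would not close, since there can be as few as $k/2$ units.

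The genuine gap is in \eqref{bridge}, where the target is miscomputed. For $p\ge 8n^{-1/2}$ one has $n^{k-1}p^k\ge 8^kn^{k-1-k/2}=8^kn^{(k-2)/2}=(8\sqrt n)^k/n$, not $8^kn^{(k-1)/2}$. Your concluding inequality $8^kn^{(k-1)/2}\le n^{k-1}p^k$ is therefore false at $p=8n^{-1/2}$. Against the per-step budget $8\sqrt n$ you must save a full factor $n$, but your last-unit accounting saves only $\sqrt n$. It cannot be repaired unit by unit: with the correct target, an $F$-edge into $y$ (or the forced step into an added vertex $y$) would have adjusted budget $8/\sqrt n<1$. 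What is missing is that fixing $y$ pins down the last \emph{two} steps. Two distinct vertices $u,y$ of $H$ have at most $5$ common neighbours: at most $4$ in $F$ plus the single vertex $z_{uy}$ when $u,y\in V(F)$, and at most $2$ when one of them lies in $W$. Since $v_{k-2}\neq y$ (also when $x=y$, where $k\ge 3$), this gives $B_k(x,y)\le 5\max_v P_{k-2}(v)\le 5(8\sqrt n)^{k-2}=\tfrac{5}{64}\,8^kn^{(k-2)/2}$. That yields \eqref{bridge} directly from your own path bound. The paper instead proves the absolute bound $B_k(x,y)\le 4^kt^{\lceil (k-3)/2\rceil}$: it chooses the first $k-3$ internal vertices using independence of $W$, then allows a bounded number of length-$3$ completions. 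It then compares this with $8^kn^{(k-2)/2}$ via \eqref{t2n}.
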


    \nin We now complete our proof of \Cref{ethresh} by establishing \eqref{goal}, deferring proofs of \Cref{lem:inF} and \Cref{cl:count} to the end of this section. We first prove \eqref{goal} when $R$ has maximum degree two.
    \begin{proof}[Proof of \eqref{goal} when $R$ has maximum degree two]

    We take $L = 8$ and fix $R \sub H$. By \ref{iso} and our assumption on the maximum degree, we may assume that $R$ is the union of $\ell$ copies of a graph $K$, where $K$ is either a $k$-path or $k$-cycle. Let 
    \[d = \begin{cases}
        p_k(1)/2 & \text{if $K$ is a path};\\
        b_k(1)/(2k) & \text{if $K$ is a cycle}.
    \end{cases}\]
    We have
    \[N(H,R) \leq \binom{n}{\ell}d^\ell\]
    since each copy of $R$ may be specified by choosing a vertex incident to each copy of $K$ in $R$ (restricting to endpoints of $K$ when $K$ is a path) and extending each vertex to a copy of $K$, noting that each $k$-path is counted twice and each $k$-cycle is counted $2k$ times (since $b_k$ counts ordered paths). Now, using \Cref{cl:count} and our choice of $L$ for the final inequality,
    \[N(H,R)\aut(R) \leq \binom{n}{\ell}d^\ell \ell! \aut(K)^\ell \leq n^\ell\begin{cases}
        p_k(1)^\ell & \text{if $K$ is a path}\\
        b_k(1)^\ell & \text{if $K$ is a cycle}.
    \end{cases} < (n^{v_K}p^{e_K})^\ell = n^{v_R}p^{e_R},\]
    which proves \eqref{goal}.
    \end{proof}

    \begin{proof}[Proof of \eqref{goal}]
    We fix $R \sub H$, recalling our assumption \ref{iso} that all connected components of $R$ are isomorphic to a graph $K$, and we take $L$ ($\geq 8$) as in \Cref{lem:inF}. To prove \eqref{goal}, 
    we need a suitable upper bound on $N(H,R)\aut(R)$. 
    
    Since we have already proved \eqref{goal} when $K$ is a path or cycle, we may assume that $K$ has a vertex of degree at least three, and we consider
    \[B = \{v \in V(R): d_R(v) > 2\}\]
    and $R_B = R[B]$. We prove an upper bound on $N(H,R)$ by first controlling the number of \emph{labeled} copies of $R_B$ in $H$ and then understanding how each labeled copy of $R_B$ can extend to a copy of $R$ in $H$, where the vertices of this copy outside of the image of $B$ are unlabeled.
    
    In each copy of $R$ in $H$, the image of $B$ must be contained in $V(F)$ (because $d_H(v) = 2$ for $v \not\in V(F)$), so the number of possible images for $B$ is at most the number of labeled copies of $R_B$ in $F$, namely $N(F,R_B)\aut(R_B)$. Let $\ext_H(B;R)$ be the maximum (over embeddings of $R_B$ in $F$) number of ways to extend a labeled copy of $R_B$ to a (partially labeled) copy of $R$ in $H$, and let $\aut(R/B)$ be the number of automorphisms of $R$ that fix $B$ pointwise. We have $\aut(R) \leq \aut(R_B)\aut(R/B)$, and
    \[N(H,R) \leq N(F,R_B)\aut(R_B)\ext(H,R/B).\]
    Thus
    \[\frac{n^{v_R}p^{e_R}}{N(H,R)\aut(R)} \geq \frac{n^{|B|}p^{e_{R_B}}}{N(F,R_B)\aut(R_B)^2}\cdot\frac{n^{v_R - |B|}p^{e_R - e_{R_B}}}{\ext_H(B;R) \aut(R/B)}.\]
    The first fraction on the r.h.s.\ is at least one by \Cref{lem:inF} and our choice of $L$, so we only need to show 
    \[ X:=\frac{n^{v_R - |B|}p^{e_R - e_{R_B}}}{\ext_H(B;R) \aut(R/B)} \geq 1.\]

    Each labeled copy of $R_B$ in $F$ can be extended to a copy of $R$ in $H$ by attaching (pairwise internally disjoint) bare paths (ones with internal degree equal to two) to the vertices of $B$. The contribution to $X$ from attaching $m$ pendant bare paths of length $k$ to some given vertex is at least
    \[\frac{n^{mk}p^{mk}}{p_k(m)m!},\]
    and similarly the contribution to $X$ from attaching $m$ bare paths of length $k$ connecting vertices $v,w$ from $B$ is at least
    \[\frac{(n^{k-1}p^{k})^m}{b_k(m)m!}.\]
    By \Cref{cl:count}, each of these fractions are greater than $1$ when $p \geq 8/\sqrt{n}$, which completes the proof.
    \end{proof}

    \begin{proof}[Proof of \Cref{cl:count}]
    We use $W:= V(H) \sm V(F)$.
    \begin{subproof}[Proof of \eqref{path}]
        It suffices to show that the number of paths of length $k$ starting at a given vertex in $H$ is at most $4^k t^{\lceil k/2 \rceil}$, since then (using \eqref{t2n} for the third inequality)
        \[p_k(m)m! \leq \binom{4^k t^{\lceil k/2 \rceil}}{m}m! \leq 4^{km} t^{m(\lceil k/2 \rceil)} < 4^{km}(\sqrt{2n})^{m(\lceil k/2 \rceil)} < (np)^{km}\]
        if $p \geq  8/\sqrt{n}$.
        To count the number of paths $v_0, v_1,\dots, v_k$ starting at some fixed $v_0 \in V(H)$, we begin specification of a path by choosing the indices $I\sub [k]$ for which the path is in $W$; given $I$, we choose the sequence $(v_i)$ in order by choosing $v_{i+1}$ from one of the $4$ neighbors of $v_i$ in $F$ when $i+1 \not\in I$, and one of the $t-1$ neighbors of $v_i$ in $W$ if $i+1 \in I$. For fixed $|I|$, the total number of choices is at most
        \[(t-1)^{|I|}4^{k - |I|};\]
        Moreover, $W$ is independent, so $|I| \leq \lceil k/2 \rceil$, implying that the total number of choices is at most
        \[\sum_{|I| \leq \lceil k/2 \rceil} (t-1)^{|I|}4^{k - |I|} < 2^k t^{\lceil k/2 \rceil}4^{\lfloor k/2 \rfloor} \leq 4^k t^{\lceil k/2 \rceil},\]
        which completes the proof.
    \end{subproof}
    \begin{subproof}[Proof of \eqref{bridge}]
        It is enough to show that the number of paths of length $k$ going between two vertices in $H$ is at most $4^k t^{\lceil (k-3)/2 \rceil}$, since then (recalling \eqref{t2n} for the third inequality)
        \[b_k(m)m! \leq \binom{4^k t^{\lceil (k-3)/2 \rceil}}{m}m! \leq 4^{km} t^{m\lceil (k-3)/2 \rceil} < 4^{km}(\sqrt{2n})^{m(\lceil (k-3)/2 \rceil)} < (n^{k-1}p^k)^{m},\]
        if $k \geq 2$ and $p \geq 8/\sqrt{n}$.
        To count the number of paths $v_0, v_1,\dots, v_k$ connecting two fixed vertices $v_0, v_k \in V(H)$, we begin by specifying the indices $I\sub [k-3]$ for which $v_{i} \in W$. Given $I$, we choose the first $k-3$ internal vertices of the path in order, noting (as before) that there are at most $4$ choices for $v_{i}$ when $i \not\in I$ and at most $t-1$ choices when $i \in I$. For fixed $|I|$, the total number of choices is thus at most
        \[(t-1)^{|I|}4^{k - 3 - |I|};\]
        Now $|I| \leq \lceil (k-3)/2 \rceil$ because $W$ is independent, so the number of choices for the first $k-3$ internal vertices of the path is at most
        \[\sum_{|I| \leq \lceil (k-3)/2 \rceil} (t-1)^{|I|}4^{k - 3 - |I|} < 2^{k-1} t^{\lceil (k-3)/2 \rceil}4^{\lfloor (k-3)/2 \rfloor} \leq 4^{k-2} t^{\lceil (k-3)/2 \rceil}.\]
        Once the first $k-3$ internal vertices of the path are specified, there are at most $12$ ways to complete the path since the number of length three paths between any two vertices in $H$ is at most $12$. Thus the total number of paths of length $k$ between two vertices is at most $4^{k-2} t^{\lceil (k-3)/2 \rceil}\cdot 12 < 4^k t^{\lceil (k-3)/2 \rceil}$.
    \end{subproof}
    
    \end{proof}

    \begin{proof}[Proof of Lemma~\ref{lem:inF}]
            Fix $F' \sub F$, take $\eps$ as in \Cref{lem:sparse}, and let $L = 64 e^{1/\eps}$. As in \ref{iso}, suffices to prove the lemma under the assumption that all components of $F'$ are isomorphic: if $F' = F_1 \sqcup F_2$ with no component of $F_1$ isomorphic to a component of $F_2$, then $\mu_p(F') = \mu_p(F_1)\mu_p(F_2)$, $\aut(F') = \aut(F_1)\aut(F_2)$, and $N(F,F') \leq N(F,F_1)N(F,F_2)$, so if \eqref{muNaut} holds for $F_1$ and $F_2$, then it also holds for $F'$.

        We need two easy facts (with explanation to follow): If $K$ is a connected subgraph of $F$, then 
        \beq{autK}\aut(K) < 4^{v_K},\enq
        and, recalling $v_F = t$,
        \beq{NSK}
            N(F, K) < t 4^{v_K}.
        \enq
        For justification of both observations, let $J$ be a graph of maximum degree $4$ and fix an ordering $(x_1,\dots, x_k)$ of $V(K)$ such that $K_i = K[\{x_1,\dots, x_i\}]$ is connected for each $i$. The number of labeled embeddings of $K$ in $J$ is at most $v_J \cdot 4 \cdot 3^{v_K - 2}$ since there are at most $v_J$ choices for the image of $x_1$ in $J$, and, having embedded $K_i$ in $F$, there are at most $4$ choices for $x_{i+1}$ if $i = 1$ and at most $3$ choices for $x_{i+1}$ if $i > 1$. When $J = K$, the bound is $v_K \cdot 4\cdot 3^{v_K-2} < 4^{v_K}$, giving \eqref{autK}, and if $J = F$, the bound is $v_F \cdot 4 \cdot 3^{v_K - 2} < t 4^{v_K}$, which implies \eqref{NSK}.

        Now, suppose $F'$ consists of $s$ components isomorphic to a graph $K$. The inequality \eqref{autK} gives
        \[\aut(F') = \aut(K)^s s! < 4^{v_K s} s! = 4^{v_{F'}} s!.\]
        Also,
        \[N(F,F') \leq \binom{N(F,K)}{s} \leq \frac{N(F,K)^s}{s!},\]
        so
        \[N(F,F')\aut(F')^2 < 16^{v_{F'}} N(F,K)^s s!.\]
        Using \eqref{NSK} in the previous bound (as well as $s! \leq s^s$), we get 
        \beq{Naut}
            N(F,F')\aut(F')^2 < 16^{v_{F'}} (t 4^{v_K})^s s^s = 64^{v_{F'}} t^s s^s.
        \enq
        
        To finish, first suppose that $v_K > \eps \log t$, recalling $\eps$ is as in \Cref{lem:sparse}. Since $s = v_{F'}/v_K < v_{F'}/(\eps \log t)$ (and $s \leq t$), we have
        \[(ts)^s \leq t^{2s} < t^{2v_{F'}/(\eps \log t)} = e^{2v_{F'}/\eps},\]
        so in this case \eqref{Naut} implies
        \beq{NFF'1}N(F,F')\aut(F')^2 < 64^{v_{F'}}e^{2v_{F'}/\eps}.\enq
        Noting that $e_{F'} \leq 2v_{F'}$ (since $F' \sub F$ has maximum degree four), we have
        \[n^{v_{F'}}p^{e_{F'}} \geq L^{2v_{F'}},\]
        which is bigger than the r.h.s.\ of \eqref{NFF'1} for our choice of $L$.

        Suppose instead that $1 < v_K \leq \eps \log t$ (the case $v_K = 1$ following directly from \eqref{t2n} for large enough $n$). \Cref{lem:sparse} gives $e_K \leq v_K$, so
        \beq{mu2}n^{v_{F'}}p^{e_{F'}} \geq (L\sqrt{n})^{v_{F'}}.\enq
        But $s \leq v_{F'}/2$ (since $v_K \geq 2$), so (again using $s \leq t$) we have
        \[(ts)^s \leq t^{2s} \leq t^{v_{F'}}.\]
        Using this bound in \eqref{Naut} gives
        \[N(F,F')\aut(F')^2 < 64^{v_{F'}}t^{v_{F'}},\]
        which, noting \eqref{t2n}, is less than the r.h.s.\ of \eqref{mu2} for our choice of $L$.
    \end{proof}

\bibliography{bib.bib}

\begin{thebibliography}{10}

\bibitem{BKBK}
Jacob~D. Baron and Jeff Kahn.
\newblock A natural extension of the {BK} inequality, 2019.
\newblock arXiv:1905.02883.

\bibitem{BCW}
Tolson Bell, Suchakree Chueluecha, and Lutz Warnke.
\newblock Note on sunflowers.
\newblock {\em Discrete Mathematics}, 344(7):112367, 2021.

\bibitem{thesis}
Quentin Dubroff.
\newblock {\em Random walks, the {H}-space of a random graph, and four miniatures}.
\newblock PhD thesis, Rutgers University, 2024.

\bibitem{KKQ}
Quentin Dubroff, Jeff Kahn, and Jinyoung Park.
\newblock On the ``second'' {K}ahn--{K}alai conjecture, 2025.
\newblock arXiv:2508.14269.

\bibitem{FKNP}
Keith Frankston, Jeff Kahn, Bhargav Narayanan, and Jinyoung Park.
\newblock Thresholds versus fractional expectation-thresholds.
\newblock {\em Annals of Mathematics}, 194(2):475--495, 2021.

\bibitem{FriezeKaronski}
Alan Frieze and Micha{\l} Karo{\'n}ski.
\newblock {\em Introduction to random graphs}.
\newblock Cambridge University Press, 2015.

\bibitem{JKV}
Anders Johansson, Jeff Kahn, and Van Vu.
\newblock Factors in random graphs.
\newblock {\em Random Structures \& Algorithms}, 33(1):1--28, 2008.

\bibitem{MNSZ}
Elchanan Mossel, Jonathan Niles-Weed, Nike Sun, and Ilias Zadik.
\newblock On the second {K}ahn--{K}alai conjecture, 2022.
\newblock arXiv:2209.03326.

\bibitem{JPsurvey}
Jinyoung Park.
\newblock Threshold phenomena for random discrete structures.
\newblock {\em Notices of the American Mathematical Society}, 70(10), 2023.

\bibitem{PP}
Jinyoung Park and Huy~Tuan Pham.
\newblock A proof of the {K}ahn-{K}alai conjecture.
\newblock {\em Journal of the American Mathematical Society}, 37:235--243, 2024.

\bibitem{Reimer}
David Reimer.
\newblock Proof of the {V}an den {B}erg--{K}esten conjecture.
\newblock {\em Combinatorics, Probability and Computing}, 9(1):27--32, 2000.

\bibitem{Talconj}
Michel Talagrand.
\newblock Are many small sets explicitly small?
\newblock In {\em Proceedings of the forty-second ACM symposium on Theory of computing}, pages 13--36, 2010.

\end{thebibliography}
\bibliographystyle{plain}

\end{document}